\newcommand\blackslug{\hbox{\hskip 1pt \vrule width 4pt height 8pt depth 1.5pt
        \hskip 1pt}}
\newcommand\bbox{\hfill \quad \blackslug \medbreak}
\newenvironment{proof}{\noindent {\bf Proof:\ }}{{\quad \blackslug \medbreak}}
\newtheorem{theorem}{}[section]
\title{Clique numbers of graph unions}
\author{Maria Chudnovsky\thanks{Partially supported by NSF grants DMS-1001091 and IIS-1117631.} \, and Juba Ziani\\
Columbia University, New York NY 10027}
\begin{document}
\maketitle
\begin{abstract}
Let $B$ and $R$ be two simple graphs with vertex set $V$, and let $G(B,R)$ be 
the simple graph with vertex set $V$, in which two vertices are adjacent if 
they are adjacent in at least one of $B$ and $R$. For $X \subseteq V$, we 
denote by 
$B|X$ the subgraph of $B$ induced by $X$; let $R|X$ and $G(B,R)|X$ be defined 
similarly. We say that the pair $(B,R)$ is {\em additive} if for every 
$X \subseteq V$, the  sum of the clique numbers of $B|X$ and $R|X$ is at least 
the clique number of $G(B,R)|X$. In this paper we give a necessary and 
sufficient characterization of additive pairs of graphs. This is a numerical 
variant of a structural question studied in \cite{ABC}.

\end{abstract}
\section{Introduction}

All graphs in this paper are finite and simple.  A {\em clique} in a graph $G$ 
is a set of pairwise adjacent vertices; and $\omega(G)$ denotes the largest 
size of a clique in $G$. The {\em complement} $G^c$ of $G$ is the graph with
vertex set $V(G)$, so that two vertices are adjacent in $G^c$  if and only if 
they are non-adjacent in $G$. A {\em stable set} of $G$ is a clique of $G^c$. 
For a subset $X$ of $V(G)$, the graph $G|X$ is a subgraph of $G$ induced by 
$X$. For a graph $H$, we say that $G$  {\em contains $H$} if some induced 
subgraph of $G$ is isomorphic to $H$. If $G$ does not contain $H$, then $G$ is 
{\em $H$-free}. If $\mathcal{H}$ is a family of graphs, then
$G$ is {\em $\mathcal{H}$-free} if $G$ is $H$-free for every 
$H \in \mathcal{H}$.

Let $B$ and $R$ be graphs with vertex set $V$. We denote by  $G(B,R)$ the 
graph with vertex set  $V$, in which two vertices are adjacent if  they are 
adjacent in at least one of $B$ and $R$. In \cite{ABC}
the following question is studied: which graphs $B$ and $R$ have the property 
that every clique of $G(B,R)$ can be expressed as the union of a clique of $B$ 
and a clique of $R$? The main result there is (here $C_k$ denotes a cycle on 
$k$ vertices):

\begin{theorem}
\label{union}
Let $B$ and $R$ be two graphs with vertex set $V$, and suppose that some clique
$G(B,R)$ cannot be expressed as the union of a clique of $B$ and a 
clique  of $R$. Then either 
\begin{itemize}
\item one of $B, R$ contains $C_4$, or 
\item both $B$ and $R$ contain $C_5$.
\end{itemize}
\end{theorem}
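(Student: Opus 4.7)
The plan is to prove the contrapositive by induction on $|K|$: if $B$ and $R$ are both $C_4$-free and, without loss of generality, $B$ is additionally $C_5$-free, then every clique $K$ of $G(B,R)$ decomposes as the union of a clique of $B$ and a clique of $R$. The cases $|K|\le 3$ are immediate, since every edge of a triangle of $G(B,R)$ lies in $B$ or in $R$, enabling a $2$-$1$ split.

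For the inductive step, let $K$ be a minimum counterexample and define on $V(K)$ the ``red-only'' graph $R^*$, whose edges are those of $R|K$ not in $B|K$, and the analogous ``blue-only'' graph $B^*$. From minimality one extracts two structural reductions: every $v\in K$ has both a red-only and a blue-only neighbor in $K$ (otherwise $v$ is adjacent in $B$ or in $R$ to all of $K\setminus\{v\}$ and attaches to any cover of $K\setminus\{v\}$), and $R^*\cup B^*$ is connected on $V(K)$ (otherwise the cross-component edges lie in both $B$ and $R$, so covers of the components combine to a cover of $K$).

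Next, fix a maximum clique $K_B$ of $B|K$ and put $K_R=K\setminus K_B$. Because $K$ is not coverable, $K_R$ is not a clique of $R$, so some $u,v\in K_R$ satisfy $uv\in E(B^*)$. Maximality of $K_B$ forces $u,v$ each to have a red-only neighbor in $K_B$; call them $a,b$. If $a\ne b$, the $4$-set $\{a,b,u,v\}$ satisfies $ab\in E(B)$, $ua,vb\in E(R^*)$, and $uv\in E(B^*)$, and a case split on the two remaining edges $ub,va$ produces either the induced $4$-cycle $a$-$b$-$u$-$v$-$a$ in $B$ (when $ub,va\in E(B)$, contradicting $C_4$-freeness of $B$) or an induced $K_{2,2}$ in $R$ (when $ub,va$ are red-only and $ab\notin E(R)$, contradicting $C_4$-freeness of $R$). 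The intermediate mixed sub-cases are eliminated by clique-swap arguments on $K_B$ that either extend a cover of $K\setminus\{x\}$ for some $x$ or contradict the maximality of $K_B$.

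The main obstacle is the case $a=b$, where only the triangle $\{a,u,v\}$ with $au,av\in E(R^*)$ and $uv\in E(B^*)$ is visible. Here one must use the remaining hypotheses---further blue-only edges inside $K_R$, connectivity of $R^*\cup B^*$, and non-extendability of covers of $K\setminus\{x\}$ for various $x$---to produce two additional vertices $w,z\in K$ such that, on $\{a,u,v,w,z\}$, the red-only edges form the $5$-cycle $u$-$a$-$v$-$w$-$z$-$u$ and the blue-only edges form the complementary $5$-cycle, yielding induced $C_5$'s in both $B$ and $R$ and contradicting the $C_5$-freeness of $B$. Producing exactly such $w,z$ while respecting the absence of induced $C_4$ in either graph is the technical crux of the argument.
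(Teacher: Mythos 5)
This statement is not proved in the paper at all: it is quoted as the main result of the cited reference \cite{ABC}, so there is no internal proof to compare against. Judged on its own terms, your proposal is an outline rather than a proof, and the gaps sit exactly where the content of the theorem lies. The preliminary reductions are fine (the $2$--$1$ split for $|K|\le 3$, deleting a vertex that is $B$- or $R$-complete to the rest, and gluing covers across components of the ``exactly one colour'' graph), and so is the extraction of $u,v\in K_R$ with $uv$ blue-only and red-only neighbours $a,b$ in a maximum $B$-clique $K_B$. But in the case $a\ne b$ you only settle two of the subcases (both of $ub,va$ blue, giving an induced $C_4$ in $B$; both red-only with $ab\notin E(R)$, giving an induced $C_4$ in $R$). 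The remaining configurations --- for instance $ub\in E(B)$ and $va$ red-only, or both red-only but $ab\in E(B)\cap E(R)$ --- yield no forbidden subgraph on $\{a,b,u,v\}$ at all (one gets an induced $P_4$ in $B$ and nothing useful in $R$), and the phrase ``clique-swap arguments that either extend a cover of $K\setminus\{x\}$ or contradict maximality of $K_B$'' is not an argument; you would need to exhibit the swaps and verify they produce genuine covers or larger cliques.

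The case $a=b$ is worse: you state that one ``must produce'' vertices $w,z$ so that $\{a,u,v,w,z\}$ carries complementary $5$-cycles in the red-only and blue-only graphs, and then explicitly defer this construction as ``the technical crux.'' That construction is precisely the theorem (note that the only known proof is the main result of an entire separate paper), and nothing in your sketch indicates how connectivity of $R^*\cup B^*$ or non-extendability of covers of $K\setminus\{x\}$ forces such a pair $w,z$ to exist, nor why the configuration must close up into two induced $C_5$'s rather than some longer or non-induced structure. Until the mixed subcases and the $a=b$ case are carried out in full, this is a plausible plan with an unproved core, not a proof.
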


We remark that both outcomes of \ref{union} are necessary, because of the 
following two constructions. First, let $B|X$ be 
isomorphic to $C_4$  for some $X \subseteq V$,  and $R|X=B^c|X$; then 
$X$ is a clique in $G(B,R)$, and yet $X$ cannot be expressed as the union of a 
clique of $B$ and a clique of $R$. Similarly, let $B|X$ be 
isomorphic to $C_5$  for some $X \subseteq V$,  and $R|X=B^c|X$ (and thus 
$R|X$ is also isomorphic to $C_5$); then again $X$ is a clique in $G(B,R)$, and 
yet $X$ cannot be expressed as the union of a clique of $B$ and a clique of $R$.

Our goal here is to address a variant of this question,
where we are only interested in the sizes of the cliques. We say that
the pair $(B,R)$ is {\em additive} if for every $X \subseteq V$,

$$\omega(B|X)+\omega (R|X) \geq \omega(G(B,R)|X).$$

The following is immediate:
\begin{theorem}
\label{clique}
Let $B$ and $R$ be two graphs with vertex set $V$. The pair $(B,R)$ is 
additive if and only if for every clique $X$ of $G(B,R)$
$$\omega(B|X)+\omega (R|X) \geq |X|.$$
\end{theorem}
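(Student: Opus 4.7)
The statement is a straightforward equivalence, and I expect the proof to be short in both directions; there is no real obstacle here, just a matter of extracting the relevant clique.

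For the forward implication, I would simply specialize the additivity hypothesis. Suppose $(B,R)$ is additive, and let $X$ be any clique of $G(B,R)$. Then $\omega(G(B,R)|X) = |X|$, so applying additivity to this $X$ gives $\omega(B|X) + \omega(R|X) \geq |X|$, as required.

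For the reverse implication, the plan is to reduce an arbitrary subset $X \subseteq V$ to a clique of $G(B,R)$ contained in it. Given $X \subseteq V$, let $Y \subseteq X$ be a clique of $G(B,R)|X$ of maximum size, so $|Y| = \omega(G(B,R)|X)$. Note that $Y$ is itself a clique of $G(B,R)$, so the assumption applies to $Y$ and yields
\[
\omega(B|Y) + \omega(R|Y) \geq |Y| = \omega(G(B,R)|X).
\]
Since $Y \subseteq X$, every clique of $B|Y$ is a clique of $B|X$, so $\omega(B|Y) \leq \omega(B|X)$; similarly $\omega(R|Y) \leq \omega(R|X)$. Combining these inequalities gives $\omega(B|X) + \omega(R|X) \geq \omega(G(B,R)|X)$, establishing that $(B,R)$ is additive.

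The only subtlety, if any, is recognizing that the additivity condition on all subsets $X$ is controlled by its behavior on cliques of $G(B,R)$, which is exactly the monotonicity argument above.
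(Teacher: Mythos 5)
Your proof is correct, and it fills in exactly the routine specialization-plus-monotonicity argument that the paper omits by labelling the statement as immediate. Nothing to add.
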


Please note that if $B|X$ is 
isomorphic to $C_4$  for some $X \subseteq V$,  and $R|X=B^c|X$, 
then $\omega(B|X)=\omega(R|X)=2$, and thus $$\omega(B|X)+\omega (R|X)=|X|.$$
Thus our goal here is to refine the first outcome of \ref{union}, in 
order to obtain a characterization of additive pairs.

Let us start by describing a few graphs that we need. For a graph
$G$ and two disjoint subsets $X$ and $Y$ of $V(G)$, we say that $X$
is {\em $G$-complete ($G$-anticomplete)} to $Y$ if every vertex of $X$ is
adjacent (non-adjacent) to every vertex of $Y$. If $|X|=1$, say $X=\{x\}$, we 
write ``$x$ is $G$-complete ($G$-anticomplete) to $Y$'' instead of ``$\{x\}$ is
$G$-complete ($G$-anticomplete) to $Y$''. When there is no risk of confusion,
we write ``complete'' (``anticomplete'')  instead of ``$G$-complete''
(``$G$-anticomplete'').

Let $\mathcal{F}$ be the family of graphs with vertex set
$\{a_1,a_2,a_3,b_1,b_2,b_3\}$ where $\{a_1,a_2,a_3\}$ and $\{b_1,b_2,b_3\}$ are 
cliques, $a_i$ is non-adjacent to $b_i$ for $i \in  \{1,2,3\}$, and the 
remaining adjacencies are arbitrary.

Let $P_0$ be the graphs with vertex set
$\{a_1,a_3,a_3,b_1,b_2,b_3,c\}$ where 
\begin{itemize}
\item $\{a_1,a_2,a_3\}$ is a clique, 
\item $\{b_1,b_2,b_3\}$ is a stable set, 
\item for $i \in \{1,2,3\}$, $b_i$ is non-adjacent to $a_i$, and complete
to $\{a_1,a_2,a_3\} \setminus \{a_i\}$,
\item $c$ is adjacent to $b_1$, and has no other neighbors in $P_0$.
\end{itemize}
Let $P_1$ be the graph obtained from $P_0$ by adding the edge $cb_2$, and let
$P_2$ be the graph obtained from $P_1$ by adding the edge $cb_3$. Let
$\mathcal{P}=\{P_0,P_1,P_2\}$.

\newpage 

\begin{figure}[!htb]
\minipage{0.32\textwidth}
  \includegraphics[width=\linewidth]{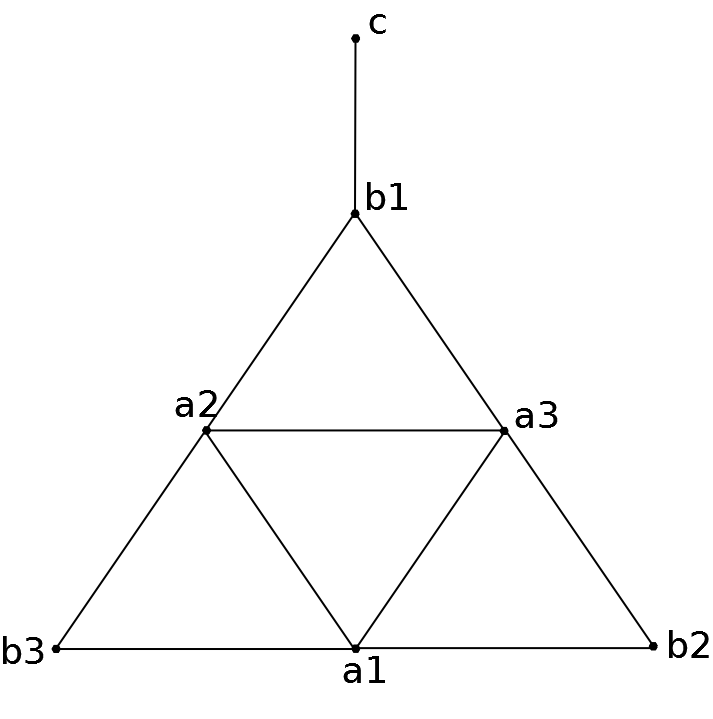}
  \caption{$P_0$}\label{fig:P0}
\endminipage\hfill
\minipage{0.32\textwidth}
  \includegraphics[width=\linewidth]{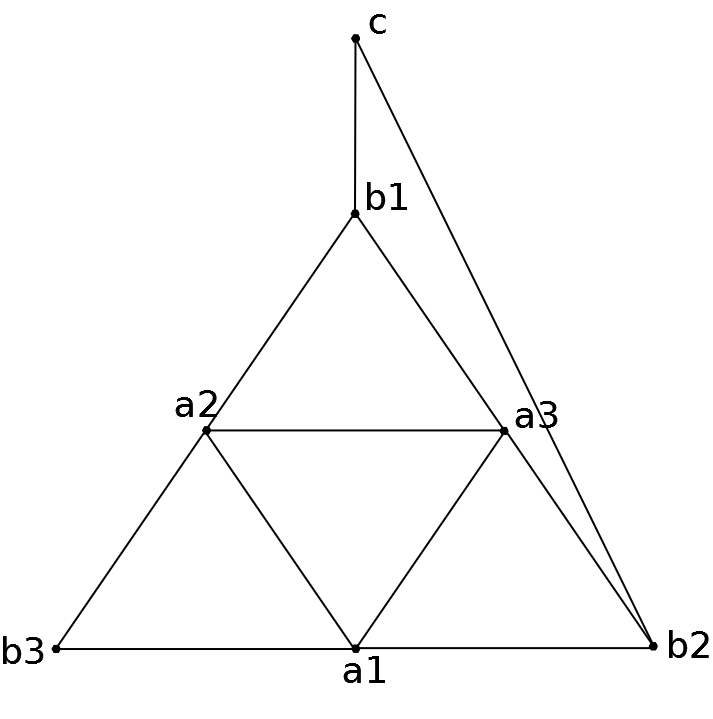}
  \caption{$P_1$}\label{fig:P1}
\endminipage\hfill
\minipage{0.32\textwidth}%
  \includegraphics[width=\linewidth]{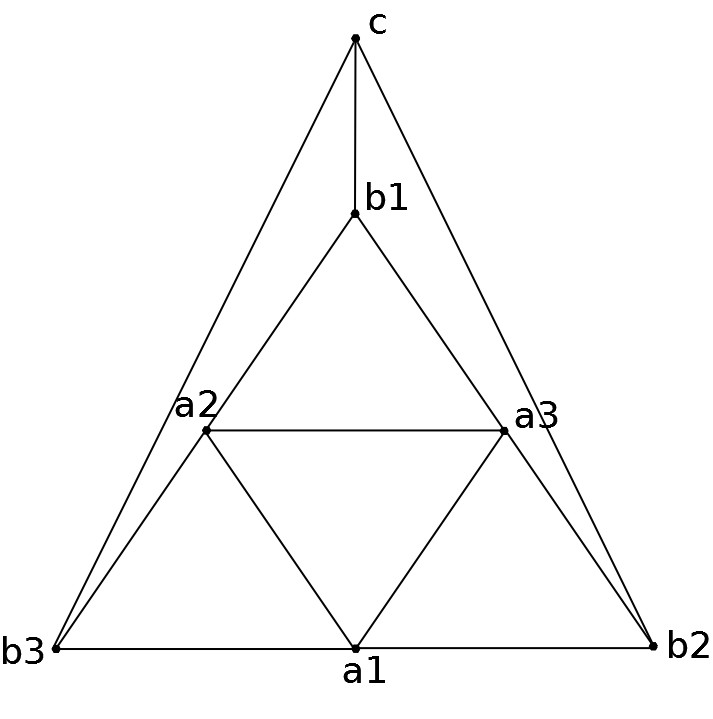}
  \caption{$P_2$}\label{fig:P2}
\endminipage
\end{figure}

\vspace*{5mm}

We can now state our main result.

\begin{theorem}
\label{main}
Let $B$ and $R$ be two graphs with vertex set $V$. Then either
the pair $(B,R)$ is additive, or 
\begin{enumerate}
\item one of $B,R$ contains a member of $\mathcal{F}$, or
\item both $B$ and $R$ contain $C_5$, or
\item both $B$ and $R$ contain $P_0^c$, or
\item $B$ contains $P_0^c$, and $R$ contains a member of $\mathcal{P}$, or
\item $R$ contains $P_0^c$, and $B$ contains a member of $\mathcal{P}$.
\end{enumerate}
\end{theorem}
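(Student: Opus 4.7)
The plan is to prove the contrapositive: if none of outcomes (1)--(5) hold, then $(B,R)$ is additive. By Theorem~\ref{clique}, it suffices to verify $\omega(B|X)+\omega(R|X)\geq |X|$ for every clique $X$ of $G(B,R)$. I will argue by contradiction; fix a counterexample $X$ of minimum size and write $a:=\omega(B|X)$, $b:=\omega(R|X)$, $n:=|X|$, so $a+b\leq n-1$.

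The first step is to extract the consequences of minimality. For every $v\in X$ the set $X\setminus\{v\}$ is still a clique of $G(B,R)$, so minimality gives $\omega(B|(X\setminus\{v\}))+\omega(R|(X\setminus\{v\}))\geq n-1$; combined with the trivial bounds $\omega(B|(X\setminus\{v\}))\leq a$ and $\omega(R|(X\setminus\{v\}))\leq b$, this forces $a+b=n-1$ and, moreover, every vertex of $X$ is missed by some maximum clique of $B|X$ and by some maximum clique of $R|X$. This ``criticality'' is the main structural hook for the rest of the argument.

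Next, I apply Theorem~\ref{union} to the pair $(B|X,R|X)$: the clique $X$ of $G(B|X,R|X)$ cannot be written as the union of a $B|X$-clique and an $R|X$-clique, so either both $B|X$ and $R|X$ contain $C_5$---which immediately yields outcome (2)---or, up to interchanging $B$ and $R$, $B|X$ contains an induced $C_4$ on some set $\{v_1,v_2,v_3,v_4\}$. The diagonals $v_1v_3$ and $v_2v_4$ are then edges of $R|X$, and from here the proof proceeds by analyzing how the remaining vertices of $X$ (each joined to every $v_i$ in $B$ or in $R$) attach to this configuration.

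The heart of the argument, and the main obstacle, is this extension step. I would fix a maximum $B$-clique $C\subseteq X$ (necessarily of size $a\geq 2$) and, for each $w\in X\setminus C$, study the $B$-non-neighbors of $w$ inside $C$: every such $w$ has at least one such non-neighbor (else $C$ is not maximum), and any pair $u,u'\in C$ of $B$-non-neighbors of $w$ produces an $R$-triangle $\{u,u',w\}$. Iterating this, together with the criticality condition that no single vertex is pinned in every maximum $B$- or $R$-clique, I expect to produce either two vertex-disjoint $B$-triangles whose non-edge pattern embeds a member of $\mathcal{F}$ (outcome (1)), or seven vertices inducing $P_0^c$ in $B$ (outcomes (3)--(5)). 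The most delicate subcase is the last one: when $B$ contains $P_0^c$ but $R$ is $P_0^c$-free, the $R$-structure on the same seven vertices is constrained tightly enough that criticality must force $R$ to contain one of $P_0,P_1,P_2$, which is exactly what outcomes (4) and (5) encode.
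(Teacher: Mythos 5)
Your proposal is a plan rather than a proof: the part you yourself call ``the heart of the argument'' is left as an expectation (``I expect to produce either two vertex-disjoint $B$-triangles \ldots or seven vertices inducing $P_0^c$''), and it is exactly there that all the work of the paper lies. What you do establish --- minimal counterexample, $a+b=n-1$, and the criticality statement that every vertex is avoided by a maximum clique of $B|X$ and by one of $R|X$ --- corresponds to the paper's \ref{clique} and \ref{KL}, and invoking \ref{union} to dispose of the double-$C_5$ case is legitimate. But the $C_4$ you get from \ref{union} is never used by your sketch, and the sketch gives no mechanism for manufacturing a member of $\mathcal{F}$ or a copy of $P_0^c$. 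The paper's mechanism is quite specific: Hall's theorem shows that for any two maximum cliques $K_1,K_2$ of $B$, the sets $K_1\setminus K_2$ and $K_2\setminus K_1$ are matched by edges of $R\setminus B$ (\ref{matching}); $\mathcal{F}$-freeness then forces $|K_1\setminus K_2|\le 2$ (\ref{dif<3}); iterating this over the cliques $K_v$ ($v\in V$) yields, via \ref{Ki} and $K=L=\frac{n-1}{2}$, a global decomposition $V\setminus\{v_R\}=Y\cup Z$ with $Y$ a $B$-clique, $Z$ an $(R\setminus B)$-clique, and the $B$-edges between them forming ``complete minus a perfect matching''; only then does the attachment analysis of $v_R$ (\ref{Redge}, \ref{subgraphs}) produce $P_0^c$, $P_0$, $P_1$ or $P_2$, and the final contradiction uses the symmetric decomposition around $v_B$ together with a counting argument on $R\setminus B$- and $B\setminus R$-degrees. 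None of this structure is visible in your outline, and ``fix a maximum $B$-clique and study non-neighbors of outside vertices'' does not by itself generate it.

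Two concrete points in the sketch are also wrong or unsupported. First, if $u,u'\in C$ are $B$-non-neighbors of $w$, then $\{u,u',w\}$ need not be an $R$-triangle: $wu,wu'\in R$ because $X$ is a clique of $G(B,R)$, but $uu'$ is an edge of $B$ and need not lie in $R$. Second, your final subcase claims that when $B$ contains $P_0^c$ and $R$ is $P_0^c$-free, the restriction of $R$ to \emph{the same seven vertices} must contain a member of $\mathcal{P}$. On those seven vertices $R$ is $P_0$ plus an arbitrary subset of the $B$-edges, and adding edges can destroy every induced copy of $P_0,P_1,P_2$; nothing in your criticality condition pins down $R$ on that particular $7$-vertex set. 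In the paper the $\mathcal{P}$-member of $R$ is found on a different vertex set altogether, via \ref{subgraphs} applied with the roles of $B$ and $R$ exchanged (using $v_B$, $Y'$, $Z'$), so this step would need a genuinely new argument, which the proposal does not supply.
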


Let us show that, similarly to \ref{union}, all the  outcomes of 
\ref{main} are necessary. Taking $B$ to be a member of $\mathcal{F}$ (or
$C_5$), and taking $R=B^c$, we construct a pair that is not additive,
and that satisfies only \ref{main}.1(or only \ref{main}.2).
Next, let $B=P_0^c$, and let $R$ be the graph obtained from $=B^c$ by adding
the edge $ca_1$; then
$(B,R)$ is not additive, and it only satisfies \ref{main}.3. Finally, let 
$B=P_0^c$, and let $R$ be the graph obtained from $B^c$ by adding
none, one or both of the edges $cb_2$ and $cb_3$; then the pair $(B,R)$ is not
additive, and it only satisfies \ref{main}.4. Clearly, \ref{main}.5
is just \ref{main}.4 with the roles of $R$ and $B$  reversed.

\section {Proof of \ref{main}}

In this section we prove~\ref{main}. Write $\omega_R=\omega(R)$ and 
$\omega_B=\omega(B)$. Suppose \ref{main} is false, and let 
$B$ and $R$ be two graphs with vertex set $V$ be such that the pair 
$(B,R)$ is not additive, and 
\begin{itemize}
\item both $B,R$ are $\mathcal{F}$-free, and
\item at least one of $B$ and $R$ is $C_5$-free, and
\item at least one of $B$ and $R$ is $P_0^c$-free, and
\item $B$ is $P_0^c$-free or  $R$ is $\mathcal{P}$-free, and
\item $R$ is $P_0^c$-free, or $B$ is $\mathcal{P}$-free, and
\item $B$ and $R$ are chosen with $|V|$ minimum subject to the
conditions above.
\end{itemize}
Write $|V|=n$. By~\ref{clique}, the minimality of $|V|$ implies that $G(B,R)$ 
is a complete graph with vertex set $V$, and $\omega_R+\omega_B < n $. 
Consequently, neither of $B,R$ is a complete graph, and so, 
since every pair of vertices of $V$ is adjacent in $G(B,R)$, we deduce that
$\omega_R \geq 2$, and $\omega_B \geq 2$.

\begin{theorem}
\label{n>5}
$n \geq 6$
\end{theorem}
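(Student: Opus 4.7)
The plan is to use the constraints already extracted in the paragraph preceding the statement to squeeze $n$ from below by $5$, and then rule out $n=5$ by a small Ramsey-type argument that forces both $B$ and $R$ to equal $C_5$, contradicting the assumption that at least one of them is $C_5$-free.

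First I would note that since $\omega_B\geq 2$, $\omega_R\geq 2$, and $\omega_B+\omega_R<n$, we immediately get $n\geq 5$. So the only thing to rule out is $n=5$. Suppose $n=5$. Then $\omega_B+\omega_R<5$ forces $\omega_B=\omega_R=2$, i.e., both $B$ and $R$ are triangle-free. Because $G(B,R)$ is complete on $V$, every edge of the complete graph $K_V$ lies in $B$ or in $R$; equivalently, the non-edges of $B$ are edges of $R$, so (as an edge set) $B^c\subseteq R$. In particular $B^c$ is triangle-free as well.

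The key step is then: a graph on $5$ vertices such that both it and its complement are triangle-free must be isomorphic to $C_5$. I would prove this by the standard Ramsey argument: pick any vertex $v$; among the other four vertices, $v$ has either at least two $B$-neighbors or at least two $B^c$-neighbors; then in either monochromatic pair we can continue the usual $R(3,3)=6$ case-check, which on only five vertices forces the edge pattern of $C_5$. Hence $B\cong C_5$. Since $C_5^c\cong C_5$, we then have $B^c\cong C_5\subseteq R$, and since any proper supergraph of $C_5$ on five vertices contains a triangle (each non-edge of $C_5$ has a common neighbor in $C_5$), the triangle-freeness of $R$ forces $R\cong C_5$ as well.

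But this means both $B$ and $R$ contain $C_5$, contradicting the third bullet in our minimality assumption. The only mildly delicate point is the Ramsey case-check, but on five vertices it is a few lines; everything else is immediate from the standing hypotheses on $(B,R)$. Hence $n=5$ is impossible and $n\geq 6$.
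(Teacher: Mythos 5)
Your proof is correct and follows essentially the same route as the paper: the paper also reduces to $n=5$ with $\omega_B=\omega_R=2$ and simply asserts that both $B$ and $R$ must then be isomorphic to $C_5$, contradicting the $C_5$-freeness assumption, while you fill in the Ramsey-type details justifying that assertion. One trivial slip: the $C_5$-freeness hypothesis is the second bullet in the standing assumptions, not the third.
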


\begin{proof}
Suppose $n \leq 5$.
Since both $\omega_R \geq 2$, and $\omega_B \geq 2$, and 
$\omega_R+\omega_B < n$, it follows that $|V|=5$, and 
$\omega_R=\omega_B=2$. But then both $B$ and $R$ are isomorphic to $C_5$,
a contradiction. This proves~\ref{n>5}.
\end{proof}

Let $$K=max_{v \in V} {\omega(B \setminus v)},$$
and $$L=max_{v \in V} {\omega(R \setminus v)}.$$

\begin{theorem}
\label{KL}
$\omega_B=K$ and  $\omega_R=n-K-1$. Moreover, for every $v \in V$, 
$\omega(B \setminus v)=K$ and $\omega (R \setminus v)=n-K-1$.
Similarly, $\omega_R=L$, $\omega_B=n-L-1$, and  for every $v \in V$, 
$\omega(R \setminus v)=L$ and $\omega (B \setminus v)=n-L-1$.
\end{theorem}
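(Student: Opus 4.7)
The plan is to exploit the minimality of $|V|$: for every $v\in V$, the pair $(B\setminus v,R\setminus v)$ inherits every hereditary property in the bullet list (being $\mathcal{F}$-free, having no $C_5$ in both components, no $P_0^c$ in both, etc., since none of these structures can be created by deleting a vertex). Thus $(B\setminus v,R\setminus v)$ is a pair on $n-1$ vertices satisfying all five non-containment conditions, so by minimality it \emph{must} be additive.

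Now apply additivity of $(B\setminus v,R\setminus v)$ to the set $X=V\setminus\{v\}$. Since $G(B,R)$ is complete, so is $G(B\setminus v,R\setminus v)$, and hence $\omega(G(B,R)|X)=n-1$. Additivity therefore gives
$$\omega(B\setminus v)+\omega(R\setminus v)\ \geq\ n-1.$$
Trivially $\omega(B\setminus v)\leq\omega_B$ and $\omega(R\setminus v)\leq\omega_R$, so $\omega_B+\omega_R\geq n-1$. Combined with the earlier inequality $\omega_B+\omega_R<n$, this pins down $\omega_B+\omega_R=n-1$.

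Since $\omega(B\setminus v)+\omega(R\setminus v)\geq n-1=\omega_B+\omega_R$ and each term is bounded above by its counterpart, the inequalities must be equalities term by term: $\omega(B\setminus v)=\omega_B$ and $\omega(R\setminus v)=\omega_R$ for every $v\in V$. Taking the maximum over $v$ then yields $K=\omega_B$ and $L=\omega_R$, and the relation $\omega_B+\omega_R=n-1$ rewrites as $\omega_R=n-K-1$ and $\omega_B=n-L-1$, which is the full statement.

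The only real point to verify is the first paragraph's claim that $(B\setminus v,R\setminus v)$ still satisfies all five bullet conditions, so that the minimality hypothesis forces additivity; everything after that is squeezing. I do not expect an obstacle here, since $\mathcal{F}$-freeness, $C_5$-freeness, $P_0^c$-freeness, and $\mathcal{P}$-freeness are all closed under taking induced subgraphs.
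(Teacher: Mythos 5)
Your proof is correct and follows essentially the same route as the paper: minimality of $|V|$ (together with heredity of the freeness conditions) forces $(B\setminus v,R\setminus v)$ to be additive, which applied to the clique $V\setminus\{v\}$ gives $\omega(B\setminus v)+\omega(R\setminus v)\geq n-1$, and then a squeeze against $\omega_B+\omega_R<n$ pins everything down. The only cosmetic difference is that you squeeze with $\omega_B,\omega_R$ and deduce $K=\omega_B$, $L=\omega_R$ at the end, whereas the paper squeezes using $K$ directly.
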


\begin{proof}
Since the second statement of \ref{KL} follows from the first by reversing the
roles of $B$ and $R$, it is enough to prove the first statement. Let $v \in V$.
Since 
$$n>\omega_B+\omega_R \geq K +\omega(R \setminus v),$$
it follows that  $\omega(R \setminus v) \leq n-K-1$.
On the other hand, it follows from the minimality of $|V|$, that
$$n-1 \leq \omega(B \setminus v)+ \omega(R \setminus v) 
\leq K+\omega(R \setminus v),$$
and so $\omega(R \setminus v) \geq n-K-1$.
Thus $\omega (R \setminus v)=n-K-1$, and $\omega(B \setminus v)=K$.
Finally, since $\omega_B \geq K$, and 
$\omega_R \geq \omega(R \setminus v)=n-K-1$, and $n>\omega_B+\omega_R$,
it follows that $\omega_B = K$, and  $\omega_R =n-K-1$.
This proves~\ref{KL}.
\end{proof}

\ref{KL} immediately implies the following:

\begin{theorem}
\label{K>1}
$K \geq 2$ and $L \geq 2$.
\end{theorem}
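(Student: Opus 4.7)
The plan is to observe that this follows almost immediately from \ref{KL} together with the basic inequalities $\omega_B \geq 2$ and $\omega_R \geq 2$ established in the opening paragraph of the proof of \ref{main}. Specifically, \ref{KL} asserts that $\omega_B = K$ and $\omega_R = L$, so once we know $\omega_B,\omega_R \geq 2$ we are done.

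To recall why $\omega_B \geq 2$ and $\omega_R \geq 2$: the minimality of $|V|$ combined with \ref{clique} forces $G(B,R)$ to be complete on $V$ with $\omega_B + \omega_R < n$. In particular neither $B$ nor $R$ is a complete graph, so each of $B$ and $R$ contains at least one non-edge; since every pair of distinct vertices is adjacent in $G(B,R)$, any non-edge of $B$ is an edge of $R$ and vice versa, giving $\omega_R \geq 2$ and $\omega_B \geq 2$.

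Combining this with the first statement of \ref{KL} (which gives $\omega_B = K$) and the second statement of \ref{KL} (which gives $\omega_R = L$) immediately yields $K \geq 2$ and $L \geq 2$. There is no real obstacle here; the statement is simply packaging the inequalities $\omega_B \geq 2$ and $\omega_R \geq 2$ in terms of $K$ and $L$ so that subsequent arguments can use the quantities $K$ and $L$ uniformly.
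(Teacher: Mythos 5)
Your proof is correct and follows the same route as the paper: the paper also deduces \ref{K>1} immediately from \ref{KL} (which gives $\omega_B = K$ and $\omega_R = L$) together with the observation, made just before \ref{n>5}, that $\omega_B \geq 2$ and $\omega_R \geq 2$ because $G(B,R)$ is complete and neither $B$ nor $R$ is. Nothing further is needed.
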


\begin{proof}
\ref{K>1} follows immediately from~\ref{KL} and the remark preceding \ref{n>5}. 
\end{proof}

We will need two new graphs:
let $B \setminus R$ be the graph with vertex set $V$, such that two vertices 
are adjacent in $B \setminus R$ if and only if they are adjacent in $B$ and
non-adjacent in $R$. Similarly, let $R \setminus B$ be the graph with vertex 
set $V$, such that two vertices are  adjacent in $R \setminus B$ if and only if 
they are adjacent in $R$ and non-adjacent in $B$.

For a graph $G$ and two disjoint subsets $X$ and $Y$ of $V(G)$ with $|X|=|Y|$, 
we say  that $X$ is {\em matched} to $Y$ if there is a matching 
$e_1, \ldots, e_{|X|}$ of $G$, so that for all $i \in \{1, \ldots, |X|\}$,
the edge $e_i$ has one end in $X$ and the other in $Y$.

\begin{theorem}
\label{matching}
Let $K_1,K_2$ be cliques of size $K$ in $B$. Then $K_1 \setminus K_2$ and 
$K_2 \setminus K_1$ are matched in $R \setminus B$.
\end{theorem}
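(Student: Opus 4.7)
The plan is to deduce the matching from Hall's marriage theorem, applied to the bipartite graph of $R\setminus B$ edges between $K_1\setminus K_2$ and $K_2\setminus K_1$. Since $|K_1\setminus K_2|=|K_2\setminus K_1|$, Hall's condition will give a perfect matching.

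Concretely, I would fix an arbitrary $S\subseteq K_1\setminus K_2$ and let $N=N_{R\setminus B}(S)\cap (K_2\setminus K_1)$. I then form the candidate set
$$T=(K_2\setminus N)\cup S.$$
The key claim is that $T$ is a clique in $B$. To verify this I would check three kinds of pairs. Pairs inside $K_2\setminus N$ are $B$-edges because $K_2$ is a $B$-clique. Pairs inside $S$ are $B$-edges because $S\subseteq K_1$, and $K_1$ is a $B$-clique. For a cross pair $u\in S$, $v\in K_2\setminus N$, there are two sub-cases: if $v\in K_1\cap K_2$, then $u,v\in K_1$ and so $uv$ is a $B$-edge; if $v\in K_2\setminus K_1$, then since $v\notin N$ we have that $u$ and $v$ are non-adjacent in $R\setminus B$, and because $G(B,R)$ is complete (established in the paragraph after the minimality setup), every pair of vertices of $V$ is an edge of $B$ or of $R$, so $uv$ must be an edge of $B$.

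With $T$ a $B$-clique and $\omega_B=K$ by \ref{KL}, we get $|T|\le K$. Since $S$ and $K_2\setminus N$ are disjoint ($S\subseteq K_1\setminus K_2$ while $K_2\setminus N\subseteq K_2$), we have $|T|=|K_2|-|N|+|S|=K-|N|+|S|$, hence $|S|\le |N|$. This is exactly Hall's condition for the bipartite graph on $K_1\setminus K_2$ and $K_2\setminus K_1$ whose edges are the $R\setminus B$-edges between the two sides, so Hall's theorem produces the required matching.

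The only subtlety to watch is that we really need $G(B,R)$ to be a complete graph on $V$ in order to conclude in the cross-pair case that ``not in $R\setminus B$'' implies ``in $B$''; this is where the minimality hypothesis enters, via \ref{clique}. Everything else is routine, so I do not anticipate a serious obstacle beyond writing the argument cleanly.
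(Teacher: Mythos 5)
Your proof is correct and is essentially the paper's argument: the paper applies Hall's theorem by supposing the matching fails and exhibiting the clique $(K_1\cap K_2)\cup Y\cup Z$ of size greater than $K$ in $B$, which is exactly your set $T=(K_2\setminus N)\cup S$, using the same two ingredients (completeness of $G(B,R)$ and $\omega_B=K$ from \ref{KL}). The only difference is presentational — you verify Hall's condition directly rather than arguing by contradiction.
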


\begin{proof}
Suppose not. Let $k=|K_1 \setminus K_2|=|K_2 \setminus K_1|$. Then by Hall's 
Theorem \cite{Hall}, there exists $Y \subseteq K_1 \setminus K_2$ and 
$Z \subset K_2 \setminus K_1$ such that $|Z|>k-|Y|$, and $Y$ is
$R\setminus B$-anticomplete to $Z$. Since $G(B,R)$ is a complete graph, it 
follows that $Y$ is $B$-complete to $Z$. But then 
$(K_1 \cap K_2)  \cup Y \cup Z$ is a clique of size at least $K+1$ in
$B$, contrary to~\ref{KL}. This proves~\ref{matching}.
\end{proof} 

\ref{matching} implies the following:

\begin{theorem}
\label{dif<3}
Let $K_1,K_2$ be cliques of size $K$ in $B$. Then 
$|K_1 \setminus K_2| \leq 2$.
\end{theorem}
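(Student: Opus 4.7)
The plan is to assume for contradiction that $|K_1 \setminus K_2| \geq 3$ and exhibit a copy of some member of $\mathcal{F}$ inside $B$, contradicting the standing assumption that $B$ is $\mathcal{F}$-free.

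The main tool is the preceding result \ref{matching}, which provides a matching between $K_1 \setminus K_2$ and $K_2 \setminus K_1$ in $R \setminus B$. If $|K_1 \setminus K_2| \geq 3$, I would pick any three edges $a_1b_1, a_2b_2, a_3b_3$ of this matching, with $a_i \in K_1 \setminus K_2$ and $b_i \in K_2 \setminus K_1$, and look at the induced subgraph of $B$ on $\{a_1,a_2,a_3,b_1,b_2,b_3\}$.

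Now I would verify that this six-vertex induced subgraph lies in $\mathcal{F}$. The set $\{a_1,a_2,a_3\}$ is contained in the $B$-clique $K_1$, so it is a clique in $B$; likewise $\{b_1,b_2,b_3\} \subseteq K_2$ is a clique in $B$. Because each $a_ib_i$ is an edge of $R \setminus B$, the vertex $a_i$ is non-adjacent to $b_i$ in $B$, for $i \in \{1,2,3\}$. The adjacencies of the remaining pairs $a_ib_j$ with $i \neq j$ are unconstrained, but $\mathcal{F}$ allows them to be arbitrary, so the induced subgraph of $B$ on these six vertices is a member of $\mathcal{F}$, the desired contradiction.

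There is essentially no obstacle beyond checking these four adjacency conditions: the proof is a direct application of \ref{matching} combined with unpacking the definition of $\mathcal{F}$. The only subtlety worth flagging is ensuring that the six vertices are distinct, which is automatic since they lie in the disjoint sets $K_1 \setminus K_2$ and $K_2 \setminus K_1$ and are endpoints of a matching.
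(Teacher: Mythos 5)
Your proof is correct and follows the same route as the paper: apply \ref{matching} to obtain three $R \setminus B$-matching edges between $K_1 \setminus K_2$ and $K_2 \setminus K_1$, and observe that the induced subgraph of $B$ on the six endpoints is a member of $\mathcal{F}$, contradicting the assumption that $B$ is $\mathcal{F}$-free. The verification of the clique, non-adjacency, and distinctness conditions is exactly as in the paper.
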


\begin{proof}
Suppose $|K_1 \setminus K_2| \geq 3$, and let 
$a_1,a_2,a_3 \in K_1 \setminus K_2$ be all distinct.
By~\ref{matching}, there exist $b_1,b_2,b_3 \in K_2 \setminus K_1$, all 
distinct, such that
the sets $\{a_1,a_2,a_3\}$ and $\{b_1,b_2,b_3\}$ are matched in $R \setminus B$.
But then $B|\{a_1,a_2,a_3,b_1,b_2,b_2\}$ is isomorphic to a member of 
$\mathcal{F}$,  a contradiction. This proves~\ref{dif<3}.
\end{proof}

In view of~\ref{KL}, for every $v \in V$, let $K_v$ be a clique of
size $K$ in $B \setminus v$. 

\begin{theorem}
\label{dif2}
There exist $u,w \in V$ such that $|K_u \setminus K_w|=2$.
\end{theorem}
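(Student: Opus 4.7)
I plan to suppose for contradiction that $|K_u \setminus K_w| \leq 1$ for all $u,w \in V$; together with \ref{dif<3}, this will establish the theorem. The first observation is that the cliques $K_v$ cannot all coincide: if $K_v = K^*$ for every $v \in V$, then $v \notin K^*$ for all $v \in V$ would force $K^* = \emptyset$, contradicting $|K^*| = K \geq 2$. Hence two of the $K_v$'s differ, and since they have the same size $K$ and differ in at most one element, they must share exactly $K-1$ vertices; call them $K^1 = A \cup \{a_1\}$ and $K^2 = A \cup \{a_2\}$, with $|A| = K - 1 \geq 1$, $a_1 \neq a_2$, and $a_1, a_2 \notin A$.

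The heart of the argument is to examine $K_v$ for a vertex $v \in A$ (which exists since $K \geq 2$ by \ref{K>1}). Since $v \in K^1 \cap K^2$ but $v \notin K_v$, the clique $K_v$ differs from both $K^1$ and $K^2$, so by hypothesis $|K_v \cap K^i| = K-1$ for $i=1,2$. A short case analysis---ruling out $K_v \supseteq A$ (which would force $v \in K_v$) and then showing that the element of $A$ which $K_v$ omits from $K^1$ and the element omitted from $K^2$ must coincide (otherwise $K_v$ would contain $A \cup \{a_1,a_2\}$, which has size $K+1 > |K_v|$)---forces $K_v = T \setminus \{v\}$, where $T := A \cup \{a_1,a_2\}$ has size $K+1$.

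Consequently, $\{K_v : v \in V\}$ contains three distinct cliques of $B$, namely $K^1 = T \setminus \{a_2\}$, $K^2 = T \setminus \{a_1\}$, and $K_v = T \setminus \{v\}$, all of which are $K$-subsets of $T$. To conclude, I use that $|T| = K + 1 > K = \omega_B$ (by \ref{KL}), so $T$ is not a clique in $B$ and contains a non-edge $\{s,t\}$ of $B$. Each of the three cliques above, being a clique in $B$, must omit $s$ or $t$, so the single element it omits from $T$ lies in $\{s,t\}$; this forces $\{a_1,a_2,v\} \subseteq \{s,t\}$, contradicting $|\{a_1,a_2,v\}| = 3$. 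The main obstacle is the case analysis in the middle step, where one must carefully track the two intersection constraints simultaneously and combine them with the condition $v \notin K_v$ to pin down $K_v$ exactly.
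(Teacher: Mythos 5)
Your proof is correct, but it takes a genuinely different route from the paper's. The paper argues directly: it fixes $v$, takes two distinct vertices $u,w\in K_v$ (possible since $K\geq 2$), assumes $K_v\setminus K_u=\{u\}$ and $K_v\setminus K_w=\{w\}$ (otherwise it is done by \ref{dif<3}), and then invokes \ref{matching} to obtain an $R\setminus B$-edge $ux$, where $x$ is the unique vertex of $K_u\setminus K_v$; since $u$ is then $B$-nonadjacent to $x$ but $B$-adjacent to the unique vertex $y$ of $K_w\setminus K_v$, it concludes $x\neq y$, hence $x,w\in K_u\setminus K_w$. Your argument, by contrast, never touches $R\setminus B$ or \ref{matching}: after using \ref{dif<3} to reduce the negation to ``all pairwise differences are at most $1$,'' it is a purely set-theoretic counting argument relying only on $|K_v|=K=\omega_B$ (from \ref{KL}), on $v\notin K_v$, and on $K\geq 2$ (from \ref{K>1}). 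You correctly rule out all the $K_v$ coinciding, pin down $K_v=T\setminus\{v\}$ for $v\in A$, where $T=A\cup\{a_1,a_2\}$ has size $K+1$, and then observe that three $K$-subsets of $T$ omitting three distinct vertices cannot all be cliques of $B$, since any non-edge of $B$ inside $T$ would have to meet all three omitted vertices; equivalently, every pair in $T$ lies in one of the three cliques, so $T$ would be a clique of size $K+1$, contradicting $\omega_B=K$. What the paper's proof buys is brevity and continuity with the surrounding machinery (\ref{matching} is already in place and is reused heavily in \ref{Ki}); what yours buys is independence from Hall's theorem and from the graph $R\setminus B$ altogether, isolating \ref{dif2} as a fact about any family of maximum cliques $\{K_v\}_{v\in V}$ with $v\notin K_v$. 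Since the subsequent argument only needs the existence of $u,w$ with $|K_u\setminus K_w|=2$, your proof is a valid drop-in replacement.
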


\begin{proof}
Let $v \in V$. By~\ref{K>1}, $K \geq 2$, and so
there exist distinct vertices $u,w \in K_v$. By~\ref{dif<3}, we may assume that
$|K_v \setminus K_u|=1$, where $K_v \setminus K_u=\{u\}$. Let
$x$ be the unique vertex of $K_u \setminus K_v$. Similarly, we may assume that
$|K_v \setminus K_w|=1$, and $K_v \setminus K_w=\{w\}$. 
Let $y$ be the unique vertex of $K_w \setminus K_v$. By~\ref{matching}
$ux$ is an edge $R \setminus B$,  and so $u$ is non-adjacent
to $x$ in $B$. Since $y,u \in K_w$, it follows that $u$ is adjacent to $y$ in 
$B$;  consequently $x \neq y$, and so $x \not \in K_w$. But now both $x$ and 
$w$ are in  $K_u \setminus K_w$, and \ref{dif2} holds. 
\end{proof}

In view of~\ref{dif2}, let $u,w \in V$ be such that 
$|K_u \setminus K_w|=|K_w \setminus K_u|=2$. Write 
$K_u \cap K_w = \{v_3, \ldots, v_K\}$, and $K_i=K_{v_i}$.
In the next theorem we study the structure of the cliques $K_i$.

\begin{theorem}
\label{Ki}
Assume $K \geq 3$. Then there exist vertices $x_1,x_2 \in K_u \setminus K_w$,
$y_1,y_2 \in K_w \setminus K_u$, and 
$p_3, \ldots, p_K \in V \setminus (K_u \cup K_w)$ such that
\begin{enumerate}
\item for every $i \in \{3, \ldots, K\}$ 
$$K_i=((K_u \cap K_w) \cup \{p_i,x_1,y_1\}) \setminus \{v_i\}.$$
\item $\{x_2,y_2,p_3, \ldots, p_K\}$ is a clique of size $K$ in $R \setminus B$.
\item Write $Y=\{x_1,y_1,v_3, \ldots,v_K\}$ and  $Z=\{x_2,y_2,p_3 \ldots, p_K\}$.
Then the pairs $x_1y_2,x_2y_1$ and $v_ip_i$ for $i \in \{3,\ldots, K\}$
are adjacent in $R \setminus B$, and all other pairs $zy$
with $z \in Z$ and $y \in Y$ are adjacent in  $B$.
\end{enumerate}
\end{theorem}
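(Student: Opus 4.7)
The plan is to determine the structure of each $K_i$ relative to $K_u$ and $K_w$ by iterated applications of \ref{matching} and \ref{dif<3}, and then to read off the edge pattern in parts 2 and 3 by further matching arguments and witness cliques. Write $K_u = (K_u \cap K_w) \cup \{x_1, x_2\}$ and $K_w = (K_u \cap K_w) \cup \{y_1, y_2\}$ with $\{x_1, x_2\} = K_u \setminus K_w$ and $\{y_1, y_2\} = K_w \setminus K_u$. For each $i \in \{3, \ldots, K\}$ I first show that $K_i$ meets each of $\{x_1, x_2\}$ and $\{y_1, y_2\}$ in exactly one vertex. If $\{x_1, x_2\} \cap K_i = \emptyset$ then $K_u \setminus K_i \supseteq \{v_i, x_1, x_2\}$, contradicting \ref{dif<3}; if $\{x_1, x_2\} \subseteq K_i$ then the bound $|K_w \setminus K_i| \leq 2$, together with $v_i \in K_w \setminus K_i$, forces $\{y_1, y_2\} \subseteq K_i$ as well, and then the $R \setminus B$-matching of $\{x_1, x_2\}$ to $\{y_1, y_2\}$ guaranteed by applying \ref{matching} to $(K_u, K_w)$ would lie inside the $B$-clique $K_i$, a contradiction. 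Combined with the symmetric statement for $\{y_1, y_2\}$, this pins down $K_i = ((K_u \cap K_w) \setminus \{v_i\}) \cup \{\xi_i, \eta_i, q_i\}$ with $\xi_i \in \{x_1, x_2\}$, $\eta_i \in \{y_1, y_2\}$, and $q_i \in V \setminus (K_u \cup K_w)$.

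Next I argue uniformity of the labels. Applying \ref{matching} to $(K_i, K_j)$ and using \ref{dif<3}, at least two of the equalities $\xi_i = \xi_j$, $\eta_i = \eta_j$, $q_i = q_j$ must hold. If $\xi_i \neq \xi_j$, then $K_i \setminus K_j = \{v_j, \xi_i\}$ and $K_j \setminus K_i = \{v_i, \xi_j\}$; both candidate matchings (parallel and crossed) pair vertices all of which lie in $K_u$, making the matched pairs edges of $B$, impossible in $R \setminus B$. Similarly the $\eta_i$'s agree. After relabeling so that $\xi_i = x_1$ and $\eta_i = y_1$ for every $i$, and setting $p_i = q_i$, part 1 holds. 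The same matching analysis under the assumption $p_i = p_j$ forces $|K_i \setminus K_j| = 1$ and $v_i v_j \in R \setminus B$, impossible since $v_i v_j \in K_u$; hence the $p_i$ are distinct, and $Z = \{x_2, y_2, p_3, \ldots, p_K\}$ has $K$ elements.

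For part 3, \ref{matching} applied to $(K_u, K_i)$ matches $K_u \setminus K_i = \{v_i, x_2\}$ to $K_i \setminus K_u = \{p_i, y_1\}$ in $R \setminus B$. The pairing $\{v_i y_1, x_2 p_i\}$ is excluded because $v_i, y_1 \in K_w$ forces $v_i y_1 \in B$, so the other pairing holds: $v_i p_i, x_2 y_1 \in R \setminus B$. The symmetric application to $(K_w, K_i)$ yields $x_1 y_2 \in R \setminus B$. Every remaining pair $(z, y) \in Z \times Y$ has both endpoints in a common clique among $K_u$, $K_w$, or $K_i$, and hence is an edge of $B$.

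Part 2 reduces to showing that no pair inside $Z$ is an edge of $B$. For each candidate pair I assume the edge lies in $B$ and build a $B$-clique of size $K$ containing that pair whose symmetric difference with a suitably chosen known max clique has size three, contradicting \ref{dif<3}. Concretely: $x_2 y_2 \in B$ produces $(K_u \cap K_w) \cup \{x_2, y_2\}$, compared against $K_3$; $x_2 p_i \in B$ produces $((K_u \cap K_w) \setminus \{v_i\}) \cup \{x_1, x_2, p_i\}$, against $K_w$; $y_2 p_i \in B$ is symmetric against $K_u$; and $p_i p_j \in B$ (with $i \neq j$) produces $((K_u \cap K_w) \setminus \{v_i, v_j\}) \cup \{x_1, y_1, p_i, p_j\}$, against $K_u$. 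This last step, in which the right witness $B$-clique must be paired with the right existing max clique for each candidate edge, is the main obstacle; the earlier steps are essentially bookkeeping with \ref{matching} and \ref{dif<3}.
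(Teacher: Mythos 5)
Your argument is correct in substance but takes a genuinely different route from the paper's. The paper's proof of \ref{Ki} makes one further direct appeal to $\mathcal{F}$-freeness: having produced $p_i$ via \ref{matching}, it notes that $B|\{x_1,x_2,y_1,y_2,v_i,p_i\}$ cannot be a member of $\mathcal{F}$, concludes that $p_i$ has an $R \setminus B$-neighbour in each of $\{x_1,x_2\}$ and $\{y_1,y_2\}$, and from that reads off $x_2,y_2 \notin K_i$ and the form of $K_i$; as a by-product it gets $\{x_2,y_2\}$ $R\setminus B$-complete to the $p_i$'s, which it reuses in part 2, so there it only has to rule out $p_kp_m$ and $x_2y_2$ being $B$-edges. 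You instead channel every use of $\mathcal{F}$-freeness through the already-proved \ref{matching} and \ref{dif<3}: the shape of each $K_i$, the agreement of the labels across different $i$ (the ``at least two of the three equalities hold'' count, plus the observation that the candidate matched pairs would all lie inside $K_u$ or $K_w$ and hence be $B$-edges), the exceptional $R\setminus B$-pairs of part 3 (from the matchings between $K_u,K_i$ and $K_w,K_i$), and all four types of pairs inside $Z$ in part 2 (witness $B$-cliques of size $K$ whose difference from a suitably chosen $K_u$, $K_w$ or $K_3$ has three elements). I checked the witness cliques and comparisons; they all work, including the $K$-counts. Your part 2 is a little longer than the paper's, but the payoff is that \ref{Ki} itself becomes pure bookkeeping with the two lemmas, with no fresh appeal to the forbidden subgraphs.

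One local slip: when excluding $\{x_1,x_2\} \subseteq K_i$, you claim that $|K_w \setminus K_i| \le 2$ together with $v_i \in K_w \setminus K_i$ ``forces $\{y_1,y_2\} \subseteq K_i$''; it does not --- it only forces at least one of $y_1,y_2$ into $K_i$. The contradiction survives either way: that one $y_m \in K_i$ is $R\setminus B$-matched to some $x_k$, and $x_k \in K_i$ by hypothesis, which already contradicts $K_i$ being a clique of $B$; alternatively, run the implication in the other direction --- the matching forces $y_1,y_2 \notin K_i$, so $|K_w \setminus K_i| \ge 3$, contradicting \ref{dif<3}. So this is a wording error rather than a gap, but it should be fixed.
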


\begin{proof}
Let $K_u \setminus K_w = \{x_1,x_2\}$, $K_w \setminus K_u = \{y_1,y_2\}$.
Fix $i \in \{3, \ldots, K\}$. Then $v_i \in K_u \setminus K_i$, and so
by~\ref{matching}, there exists $p_i \in K_i \setminus K_u$ such that
$v_ip_i$ is an edge of $R \setminus B$. Consequently, 
$p_i \not \in K_u \cup K_w$. Also by~\ref{matching}, the sets $\{x_1,x_2\}$
and $\{y_1,y_2\}$ are matched in $R \setminus B$. Since
$B|\{x_1,x_2,y_1,y_2,v_i,p_i\}$ is not isomorphic to a member of $\mathcal{F}$,
it follows that $p_i$ is not $B$-complete to either $\{x_1,x_2\}$  or
$\{y_1,y_2\}$. From the symmetry we may assume that $p_ix_2$ and $p_iy_2$
are both edges of $R \setminus B$. Therefore, $x_2,y_2 \not \in K_i$, and
so, by~\ref{dif<3}, $K_u \setminus K_i = \{x_2,v_i\}$ and
$K_w \setminus K_i=\{y_2,v_i\}$. Consequently.
$$K_i=((K_u \cap K_w) \cup \{p_i,x_1,y_1\}) \setminus \{v_i\},$$
as required.

Next, since $p_iv_i$ is an edge of $R \setminus B$, and $p_i$ is
$B$-complete to $(K_u \cap K_v) \setminus \{v_i\}$, it follows that
the vertices $p_3, \ldots, p_K$ are all distinct. 

Now let $j \in \{3, \ldots, K\} \setminus \{i\}$.
By the argument in the first paragraph of the proof applied to 
$j$ instead of $i$, we deduce that there exist $k,m \in \{1,2\}$ such that
$$K_j=((K_u \cap K_w) \cup \{p_j,x_k,y_m\}) \setminus \{v_j\},$$
To prove \ref{Ki}.1, it remains to show that $k=m=1$. Suppose not.
Since $x_1,y_1 \in K_i$ it follows that $x_1y_1$ is an edge  of $B$. On the 
other hand, \ref{matching} implies that $x_1y_2$ and $x_2y_1$ are edges of 
$R \setminus B$. Since $K_j$ is a clique of $B$, we deduce that
$x_ky_m$ is an edge of $B$, and so $k=m=2$. But then 
$K_i \setminus K_j = \{p_i,x_1,y_1\}$, contrary to~\ref{dif<3}.
This proves that $k=m=1$, and thus proves~\ref{Ki}.1.

Next, to prove \ref{Ki}.2  suppose that $\{x_2,y_2,p_3, \ldots, p_K\}$ is not
a clique of $R \setminus B$.  We showed earlier that 
$\{x_2,y_2\}$ is $R\setminus B$-complete to $\{p_3, \ldots, p_K\}$, and
that $p_3, \ldots, p_K$ are all distinct.
Suppose first that there exist $k,m \in \{3, \ldots, K\}$ such that
$p_kp_m$ is not an edge of $R\setminus B$. Then
$$X=((K_u \cap K_w) \cup \{p_k,p_m,x_1,y_1\}) \setminus \{v_k,v_m\}$$
is a clique of size $K$ in $B$, but $X \setminus K_u=\{p_k,p_m,y_1\}$,
contrary to~\ref{dif<3}. This proves that $\{p_3, \ldots, p_K\}$
is a clique of $R \setminus B$. Since $\{p_3, \ldots, p_K\}$ is 
$R \setminus B$-complete to $\{x_2,y_2\}$, but $\{x_2, y_2, p_3, \ldots, p_K\}$
is not a clique of $R \setminus B$, 
it follows that $x_2y_2$ is not an edge of 
$R \setminus B$, and therefore $x_2$ is adjacent to $y_2$ in $B$. Consequently,
$Z=(K_u \cup \{y_2\}) \setminus \{x_1\}$ is a clique of size $K$ in $B$.
But now $K_3 \setminus Z=\{x_1,y_1,p_3\}$, contrary to~\ref{dif<3}. This
proves~\ref{Ki}.2.

We now  prove the final statement of~\ref{Ki}. We have already shown that
$x_1y_2,x_2y_1$ and $v_ip_i$ for $i \in \{3,\ldots, K\}$
are adjacent in $R \setminus B$. Next we  observe that every other pair $(z,y)$ 
with $z \in Z$ and $y \in Y$ is contained in  at least one of the cliques
$K_u,K_v,K_3, \ldots, K_K$, and therefore $zy$ is an edge of $B$.
This proves~\ref{Ki}.
\end{proof}

Next we use the symmetry between $B$ and $R$ in order to obtain more 
information about maximum cliques in each of them.

\begin{theorem}
\label{K=L}
$K=L={{n-1} \over {2}} $ and $K,L \geq 3$.
\end{theorem}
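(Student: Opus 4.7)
The plan is to combine Theorem \ref{KL}, which will pin down $K+L$, with the structural Theorem \ref{Ki}, which will produce an inequality between $K$ and $L$. From \ref{KL} we immediately get $K = \omega_B = n - L - 1$, hence $K + L = n - 1$. It therefore suffices to prove $K = L$ and $K, L \geq 3$.

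My next observation is that whenever $K \geq 3$, the set produced in \ref{Ki}.2, namely $\{x_2, y_2, p_3, \ldots, p_K\}$, is a clique of size $K$ in $R \setminus B$, and so a fortiori a clique of $R$; consequently $L = \omega_R \geq K$. All the hypotheses listed at the beginning of Section 2 are symmetric in $B$ and $R$ (the two conditions involving $\mathcal{P}$ form a symmetric pair when taken together), and the proofs of \ref{matching}, \ref{dif<3}, \ref{dif2}, and \ref{Ki} invoke only these symmetric hypotheses together with each other. Hence the analogues of all these results with the roles of $B$ and $R$ exchanged are equally valid, and whenever $L \geq 3$ the swapped version of \ref{Ki}.2 furnishes a clique of size $L$ in $B \setminus R \subseteq B$, giving $K \geq L$. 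Together, $K, L \geq 3$ forces $K = L$, and combined with $K + L = n - 1$ this yields $K = L = (n-1)/2$.

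The one step with any real content is therefore to rule out $K \leq 2$ (the case $L \leq 2$ being symmetric). By \ref{K>1} we already know $K \geq 2$, so suppose for contradiction that $K = 2$. Then $L = n - K - 1 = n - 3$, and by \ref{n>5} we have $L \geq 3$. The $B \leftrightarrow R$ analogue of \ref{Ki}.2 then applies to $R$ and produces a clique of size $L$ in $B$, so $K = \omega_B \geq L \geq 3$, contradicting $K = 2$. This case analysis is what I expect to be the main (and essentially only) obstacle, but it becomes quite short once the symmetry between $B$ and $R$ has been noted; with $K, L \geq 3$ secured, the earlier reasoning completes the proof.
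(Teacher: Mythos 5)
Your proof is correct and follows essentially the same route as the paper: $K+L=n-1\geq 5$ from \ref{KL} and \ref{n>5}, then \ref{Ki}.2 together with the $B$/$R$ symmetry forces $K=L$. The paper merely compresses your case analysis by assuming $K\geq L$ without loss of generality (so $K\geq 3$ at once) and applying \ref{Ki}.2 a single time to get $L\geq K$.
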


\begin{proof}
From the symmetry between $B$ and $R$, we may assume that $K \geq L$. 
Since by~\ref{KL}, $\omega_B=K=n-1-L$, and by~\ref{n>5} $n \geq 6$,
it follows that $K+L =n-1 \geq 5$, and so $K \geq 3$. But now \ref{Ki}.2 
implies that $L \geq  K$. Thus $K=L={{n-1} \over {2}} $,  and \ref{K=L} follows.
\end{proof}

It now follows from~\ref{Ki}.3 and \ref{K=L} that there exists a vertex
$v_R \in V$ such that 
\begin{itemize}
\item $V \setminus \{v_R\} = Z \cup Y$, and
\item $Z \cap Y = \emptyset$, and
\item $Z$ is a clique of size $\frac{n-1}{2}$ in $R \setminus B$, and
\item $Y$ is a clique of size $\frac{n-1}{2}$ in $B$, and 
\item the vertices of $Z$ can be numbered $z_1, \ldots, z_K$, 
and the vertices of $Y$ can be numbered $y_1, \ldots, y_K$, such that
for $i,j \in \{1, \ldots, K\}$, the pair $z_iy_j \in B$ if and only if 
$i \neq j$.
\end{itemize}

Exchanging the roles of $R$ and $B$, we deduce also that there exists a vertex
$v_B \in V$ such that 
\begin{itemize}
\item $V \setminus \{v_B\} = Z' \cup Y'$, and
\item $Z' \cap Y' = \emptyset$, and
\item $Y'$ is a clique of size $\frac{n-1}{2}$ in $B \setminus R$, and
\item $Z'$ is a clique of size $\frac{n-1}{2}$ in $R$, and 
\item the vertices of $Z'$ can be numbered $z_1', \ldots, z_K'$,
and the vertices of $Y'$ can be numbered $y_1', \ldots, y_K'$, such that
for $i,j \in \{1, \ldots, K\}$, the pair $z_i'y_j' \in R$ if and only if 
$i \neq j$.
\end{itemize}

We now analyze the way $v_R$ attaches to $Y$ and $Z$.

\begin{theorem}
\label{Redge}
Let $i,j \in \{1, \ldots, K\}$. If $v_R$ is $B$-complete to $\{z_i,y_j\}$,
then $z_iy_j$ is an edge of $R \setminus B$.
\end{theorem}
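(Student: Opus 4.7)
The plan is to split into two cases according to whether $i = j$. If $i = j$, the conclusion is immediate from the structural setup: the description of $Y,Z$ preceding the theorem says $z_i y_i \in B$ iff $i \neq i$, so $z_i y_i \notin B$; since $G(B,R)$ is a complete graph (by the choice at the start of Section 2), $z_i y_i \in R$, so $z_i y_i \in R \setminus B$. In particular the hypothesis is not even needed in this case. So assume $i \neq j$ and that $v_R$ is $B$-adjacent to both $z_i$ and $y_j$; the goal is to derive a contradiction by exhibiting a member of $\mathcal{F}$ inside $B$.

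The key auxiliary clique to consider is $S_i := \{z_i\} \cup (Y \setminus \{y_i\})$. This is a $B$-clique of size $K$, because $Y$ is a $B$-clique and $z_i y_\ell \in B$ for every $\ell \neq i$. Since $\omega_B = K$ by \ref{KL}, the vertex $v_R$ cannot be $B$-complete to $S_i$, so $v_R$ is $B$-non-adjacent to some element of $S_i$. That element cannot be $z_i$ (by the hypothesis $v_R z_i \in B$), so it is $y_\ell$ for some $\ell \neq i$; and $\ell \neq j$ as well, by the hypothesis $v_R y_j \in B$. By \ref{K=L} we have $K \geq 3$, so $\{1,\ldots,K\} \setminus \{i,j\}$ is nonempty and such an $\ell$ genuinely exists.

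It then remains to observe that the six distinct vertices $X = \{v_R, z_i, y_j, y_\ell, y_i, z_j\}$ induce a copy of a member of $\mathcal{F}$ in $B$. Indeed, using $v_R z_i, v_R y_j \in B$ together with the rule that $z_a y_b \in B$ iff $a \neq b$ and that $Y$ is a $B$-clique, both $\{v_R, z_i, y_j\}$ and $\{y_\ell, y_i, z_j\}$ are $B$-triangles; and $v_R y_\ell$ (by choice of $\ell$), $z_i y_i$, $y_j z_j$ are $B$-non-edges forming a perfect matching between these two triangles. This contradicts the $\mathcal{F}$-freeness of $B$ imposed at the start of Section 2, completing the case $i \neq j$. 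The only real obstacle is spotting the right $\mathcal{F}$ configuration, but once one notices the clique $S_i$, the missing edge $v_R y_\ell$ is forced and the six vertices practically assemble themselves.
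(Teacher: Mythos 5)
Your proof is correct and is essentially the paper's own argument: you use the same auxiliary clique $(Y \setminus \{y_i\}) \cup \{z_i\}$ to force a non-neighbor $y_\ell$ of $v_R$, and the same six vertices $\{v_R, z_i, y_j, y_\ell, y_i, z_j\}$ then induce a member of $\mathcal{F}$ in $B$, contradicting $\mathcal{F}$-freeness. Your explicit split into $i=j$ and $i\neq j$ is just a repackaging of the paper's opening step (assuming $z_iy_j$ is a $B$-edge forces $i\neq j$), so the two proofs coincide.
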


\begin{proof}
Suppose that $v_R$ is $B$-complete to $\{z_i,y_j\}$ and $z_iy_j$ is an edge of 
$B$. Then $i \neq j$.
Since $(Y \cup \{v_R,z_i\}) \setminus \{y_i\}$ is not an clique of size $K+1$ in
$B$, it follows that there exists $t \in \{1, \ldots, K\}  \setminus \{i\}$
such that $v_Ry_t$ is an edge of $R \setminus B$. Then $t \neq j$. But now
$B|\{v_R,z_i,y_j,y_t,y_i,z_j\}$ is isomorphic to a member of $\mathcal{F}$, 
a contradiction. This proves~\ref{Redge}.
\end{proof}

We are finally ready to establish the existence of certain induced subgraphs
in $B$ and $R$.

\begin{theorem}
\label{subgraphs}
At least one of the following holds:
\begin{enumerate}
\item $B$ contains $P_0^c$, or
\item $B$ contains $P_1$ or $P_2$, and $v_R$ is $R \setminus B$-complete to 
$Y$, or
\item $B$ contains $P_0$, and there exists $z \in Z$ such 
that $v_R$ is $R \setminus B$-complete to $(Y \cup Z) \setminus \{z\}$.
\end{enumerate}
\end{theorem}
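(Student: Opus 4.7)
My plan is to study the $B$-neighborhood of $v_R$ inside $Y \cup Z$. Write $Y^B = N_B(v_R) \cap Y$ and $Z^B = N_B(v_R) \cap Z$; by \ref{Redge}, any $z_i \in Z^B$ and $y_j \in Y^B$ must satisfy $i = j$. Two quick observations cut down the possibilities. First, $Z^B \neq \emptyset$: otherwise $v_R$ is $R$-complete to $Z$, and since $Z$ is already a clique in $R \setminus B \subseteq R$, the set $Z \cup \{v_R\}$ would be a clique of size $K+1$ in $R$, contradicting \ref{KL}. Second, $|Y^B| \leq 1$: if distinct $y_j, y_{j'} \in Y^B$ existed, the index-matching forced by \ref{Redge} would give $Z^B = \emptyset$, contradicting the first observation.

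Case $Y^B = \emptyset$: $v_R$ is automatically $R \setminus B$-complete to $Y$, so I just need a copy of $P_0$, $P_1$, or $P_2$ in $B$. I use the fact that for any three distinct indices $i,j,k$ (which exist since $K \geq 3$ by \ref{K=L}), the induced subgraph of $B$ on $\{z_i,z_j,z_k,y_i,y_j,y_k\}$ is isomorphic to $P_0 \setminus c$: the $y$'s give the triangle, the $z$'s give the anti-triangle, and the only non-edges between the two triples are the matched pairs $z_\ell y_\ell$. Fix $j$ with $z_j \in Z^B$. If $|Z^B| = 1$, pick any further indices $i,k$; on $\{v_R, z_j, z_i, z_k, y_j, y_i, y_k\}$, $v_R$ plays the role of $c$ and $z_j$ the role of $b_1$, giving $P_0$ and establishing outcome 3 with $z = z_j$. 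If $|Z^B| \geq 2$, the same construction with two or three of $z_j, z_i, z_k$ chosen inside $Z^B$ gives $P_1$ or $P_2$, establishing outcome 2.

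Case $|Y^B| = 1$, say $Y^B = \{y_j\}$: the index-matching from \ref{Redge} together with $Z^B \neq \emptyset$ forces $Z^B = \{z_j\}$. I would prove outcome 1 by exhibiting $P_0^c$ on $\{v_R, z_j, y_j, z_i, y_i, z_k, y_k\}$ for any indices $i, k$ distinct from each other and from $j$. The bijection I expect to use sends $c \mapsto y_j$, $b_1 \mapsto z_j$, $b_2 \mapsto y_i$, $b_3 \mapsto y_k$, $a_1 \mapsto v_R$, $a_2 \mapsto z_k$, $a_3 \mapsto z_i$, so the matching edges $a_\ell b_\ell$ become $v_R z_j$, $z_k y_i$, $z_i y_k$. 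Verification is then mechanical: the triangle on the $b$'s, the anti-triangle on the $a$'s, the six non-edges between unmatched $(a_\ell, b_m)$ pairs, and the five edges from $c$ to $V(P_0^c) \setminus \{c, b_1\}$ all follow from the clique structure of $Y$, the stable structure of $Z$ (in $B$), the matched non-edges $z_\ell y_\ell$, and the assumption that $v_R$ is $B$-adjacent to exactly $z_j$ and $y_j$ in $Y \cup Z$.

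The main obstacle is this last case. The ``obvious'' six-vertex subgraph on three matched pairs $(z_\ell, y_\ell)$ is $P_0 \setminus c$, not $P_0^c \setminus c$, so a naive attempt to place $v_R$ as $c$ of a $P_0^c$ fails. The point is that $P_0^c \setminus c$ also embeds into this subgraph plus $v_R$, but only via the permutation above, which crosses $y$'s and $z$'s between the clique and stable parts of the $P_0^c$ template; spotting this embedding is the heart of the argument.
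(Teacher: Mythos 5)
Your proposal is correct and takes essentially the same route as the paper: both arguments combine \ref{Redge} with the maximality of the $R$-clique $Z$ to pin down the $B$-neighbours of $v_R$ in $Y \cup Z$ (at least one in $Z$, at most one in $Y$, with indices matched), and then read off $P_0$, $P_1$, $P_2$ or $P_0^c$ on the same seven-vertex sets, including the same ``crossed'' embedding of $P_0^c$ in the final case. The only difference is organizational---you split according to $(|N_B(v_R)\cap Y|, |N_B(v_R)\cap Z|)$ while the paper splits on whether $v_R$ has a second $B$-neighbour in $Z$ and on the adjacency of $v_R y_1$---which amounts to the same case analysis.
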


\begin{proof}
Since $Z \cup \{v_R\}$ is not a clique of size $K+1$ in $R$, it
follows that $v_R$ has a neighbor in $Z$ in $B \setminus R$. We may assume
that $v_Rz_1$ is an edge of $B \setminus R$. Since $z_1$ is $B$-complete
to $Y \setminus \{y_1\}$, \ref{Redge} implies that $v_R$ is 
$R \setminus B$-complete to $Y \setminus \{y_1\}$.

Suppose $v_R$ has a neighbor in $Z \setminus \{z_1\}$ in $B$, say
$v_Rz_2$ is an edge of $B$. Then by~\ref{Redge} $v_R$ is adjacent
in  $R \setminus B$ to $y_1$, and so $v_R$ is $R \setminus B$-complete to
$Y$. Also, $B|\{y_1,y_2,y_3,z_1,z_2,z_3,v_R\}$ is isomorphic to $P_1$
if $v_Rz_3$ is an edge of $R \setminus B$, and to $P_2$ if 
$v_Rz_3$ is an edge of $B$, and the second outcome of the theorem holds.

So we may assume that $v_R$ is $R \setminus B$-complete to 
$Z \setminus \{z_1\}$. Now if  $v_Ry_1$ is an edge of $B$, then
$B|\{y_1,y_2,y_3,z_1,z_2,z_3,v_R\}$ is isomorphic to $P_0^c$,
and the first outcome of the theorem holds;
and if $v_Ry_1$ is an edge of $R \setminus B$, then
$B|\{y_1,y_2,y_3,z_1,z_2,z_3,v_R\}$ is isomorphic to $P_0$,  $v_R$
is $R \setminus B$-complete to $(Y \cup Z) \setminus \{z_1\}$, and the
third outcome of the theorem holds. This proves~\ref{subgraphs}
\end{proof}

Applying \ref{subgraphs} with the roles of $R$ and $B$ reversed, we deduce that
either
\begin{enumerate}
\item $R$ contains $P_0^c$, or
\item $R$ contains $P_1$ or $P_2$, and $v_B$ is $B\setminus R$-complete to 
$Z'$, or
\item $R$ contains $P_0$, and there exists $y' \in Y'$, such that
$v_B$ is $B \setminus R$-complete to $(Y' \cup Z') \setminus \{y'\}$.
\end{enumerate}

To complete the proof of~\ref{main}, we now analyze the possible outcomes 
of~\ref{subgraphs}.  Observe first that by~\ref{subgraphs}, each of $B,R$ 
either contains $P_0^c$, or contains a member of $\mathcal{P}$.
Thus, if the first outcome of~\ref{subgraphs} holds for at least
one of $B,R$ (in other words, one of $B,R$ contains $P_0^c$), we get a 
contradiction to the third, fourth or fifth assumption at the start of 
Section~2.

So we may assume that either the second or the 
third  outcome of~\ref{subgraphs} holds for $B$, and the same for $R$.
Therefore $v_R$ is $R \setminus B$-complete to $Y$.  We claim that 
every vertex of $V$ has at least two neighbors in  $R \setminus B$.
Since by~\ref{K=L} $|Y|,|Z| \geq 3$, it follows that $v_R$ has at least two
neighbors in $Y$ in $R \setminus B$, and that every vertex of $Z$ has at least
two neighbors in $Z$ in $R \setminus B$. Since $v_R$ is $R \setminus B$-complete
to $Y$, and every vertex of $Y$ has a neighbor in $Z$ in $R \setminus B$,
the claim follows. Similarly, every vertex of $V$ has at least
two neighbors in $B \setminus R$.

Next we observe that if the third outcome of~\ref{subgraphs} holds for $B$,
then $v_R$ has at most one neighbor in $B$, and if the third outcome 
of~\ref{subgraphs} holds for $R$, then $v_B$ has at most one neighbor in $R$.
This implies that the third outcome of~\ref{subgraphs} does not hold for 
either $B$ or $R$, and thus the second outcome of~\ref{subgraphs} holds for 
both $B$ and $R$; consequently each of $B$ and $R$ contains $P_1$ or $P_2$. But
both $P_1$ and $P_2$ contain $C_5$, contrary to the second assumption at the
start of Section~2. This completes the proof of~\ref{main}. \bbox

\section{Acknowledgment}

We would like to thank Irena Penev for her careful reading of the manuscript, 
and for her helpful suggestions regarding its presentation.

\begin {thebibliography}{99}

\bibitem{ABC} R.Aharoni, E.Berger, M.Chudnovsky,
``Cliques in the union of graphs'', {\em submitted for publication}. 

\bibitem{Hall} P. Hall,  ``On Representatives of Subsets'', 
{\em J. London Math. Soc.} {\bf 10} (1935), 26--30.

\end{thebibliography}
\end{document}